\DeclareMathOperator{\Char}{Char}
\DeclareMathOperator{\ad}{ad}
\DeclareMathOperator{\ssp}{sing\ supp}
\newtheorem{theorem}{Theorem}
\newtheorem{proposition}{Proposition}
\newtheorem{definition}{Definition}
\newtheorem{remark}{Remark}
\def\jpopn#1#2{%
  \mathopen{%
    \setbox0=\hbox{$#1\langle$}%
    \setbox2=\hbox{%
            {\hbox{$#1\langle$}}%
            \kern -.6\wd0\box0%
    }%
            \box2%
  }%
}
\def\jpcls#1#2{%
  \mathclose{%
    \setbox0=\hbox{$#1\rangle$}%
    \setbox2=\hbox{%
            {\hbox{$#1\rangle$}}%
            \kern -.6\wd0\box0%
    }%
            \box2%
  }%
}
\def\tp{\ {}^{t}\kern -3pt}
  \renewcommand{\setminus}{\mathbin{\backslash}}%
\def\uu#1{$\breve{\textrm{#1}}$}
\def\vv#1{$\check{\textrm{#1}}$}
\def\np#1{\ensuremath{\mathbf{NP}(\kern -1pt\it{#1})}}
\def\dnp#1{\ensuremath{\partial\mathbf{NP}(\kern -1pt\it{#1})}}
\renewcommand{\kappa}{\ensuremath{\varkappa}}
\renewcommand{\phi}{\ensuremath{\varphi}}
\renewcommand{\epsilon}{\ensuremath{\varepsilon}}
\def\R {{\mathbb{R}}}
\def\N {{\mathbb{N}}}
\def\Z {{\mathbb{Z}}}
\def\T {{\mathbb{T}}}
\begin{document}

\title[Globally Analytic hypoelliptic Sums of Squares]{On a Class of
Globally Analytic Hypoelliptic Sums of Squares}
\author{Antonio Bove}
\address{Dipartimento di Matematica, Universit\`a
di Bologna, Piazza di Porta San Donato 5, Bologna
Italy}
\email{bove@bo.infn.it}
\author{Gregorio Chinni}
\address{Dipartimento di Matematica, Universit\`a
di Bologna, Piazza di Porta San Donato 5, Bologna
Italy}
\email{gregorio.chinni3@unibo.it}
\date{\today}
\begin{abstract}
We consider sums of squares operators globally defined on the
torus. We show that if some assumptions are satisfied the operators
are globally analytic hypoelliptic. The purpose of the assumptions is
to rule out the existence of a Hamilton leaf on the characteristic
variety lying along the fiber of the cotangent bundle, i.e. the case
of the (global) M\'etivier operator. 
\end{abstract}
\subjclass[2020]{35H10, 35H20 (primary), 35B65, 35A20, 35A27 (secondary).}
\keywords{Sums of squares of vector fields; Global analytic hypoellipticity;
  Treves conjecture}
\maketitle
%
%

\section{Introduction}

The aim of this paper is to study a class of sums of squares operators
defined globally on a torus $ \T^{n} $. More precisely let $ P $ be a
sum of squares operator of the form
\begin{equation}
  \label{eq:sosq}
P (x, D) = \sum_{j=1}^{N} X_{j} (x, D)^{2} . 
\end{equation}
Here $ X_{j}(x, D) $ denotes a vector field with real analytic
coefficients defined on $ \T^{n} $, and we shall always assume that
H\"ormander condition is satisfied:
\begin{itemize}
\item[\textbf{(H)}]{}
  The vector fields and their iterated commutators generate a Lie
  algebra of the same dimension of the ambient space, i.e. $ n $. 
\end{itemize}
We are concerned with the regularity of the solutions to the equation
$ Pu = f $, where $ f \in C^{\omega}(\T^{n}) $. It is known that in
general the solutions $ u $ are not real analytic on $ \T^{n}
$. Actually let us consider the global M\'etivier operator
\begin{equation}
\label{eq:metop}
P_{M} = D_{x}^{2} + (\sin x)^{2} D_{y}^{2} + ( \sin y \  D_{y})^{2} .
\end{equation}
%
%
\begin{proposition}[see Treves, \cite{trevespienza}]
\label{prop:met}
The operator $ P_{M} $ in \eqref{eq:metop} is globally Gevrey 2
hypoelliptic on the torus $ \T^{2} $ and not better. 
\end{proposition}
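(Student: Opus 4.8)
The plan is to prove the two halves of the statement separately: the upper bound, that $P_{M}$ is globally $G^{2}$ hypoelliptic, and its sharpness. The relevant geometry is that the principal symbol of $P_{M}$ is
\[
p_{2}(x,y,\xi,\eta)=\xi^{2}+\big(\sin^{2}x+\sin^{2}y\big)\eta^{2},
\]
which is strictly positive on $T^{*}\T^{2}\setminus 0$ off the characteristic set
\[
\Sigma=\{(x_{0},y_{0},0,\eta):\ \sin x_{0}=\sin y_{0}=0,\ \eta\neq0\},
\]
the (punctured) fibre lines over the four points $(x_{0},y_{0})$. On $\Sigma$ only $\eta$ varies while $dy$ vanishes, so $\Sigma$ is isotropic, hence non-symplectic; in the spirit of Treves' conjecture this is the obstruction to analytic hypoellipticity, and the Proposition pins down the exact loss.

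\emph{Upper bound.} Off $\Sigma$ the operator is elliptic of order $2$, hence analytically, a fortiori $G^{2}$, hypoelliptic there. Near $\Sigma$, with $X_{1}=D_{x}$, $X_{2}=\sin x\,D_{y}$, $X_{3}=\sin y\,D_{y}$, one computes $[X_{1},X_{2}]=-i\cos x\,D_{y}$, and since $\cos x_{0}=\pm1$ on $\Sigma$ the pair $\{X_{1},[X_{1},X_{2}]\}$ spans the cotangent fibre there. Hence the H\"ormander step of $P_{M}$ equals $2$ everywhere. By the classical subelliptic estimate for sums of squares (gain $1/2$ at step $2$) and its Gevrey refinement (Derridj--Zuily), $P_{M}$ is locally $G^{2}$ hypoelliptic; since $\T^{2}$ is compact and the local constants are uniform, it is globally $G^{2}$ hypoelliptic.

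\emph{Sharpness.} One builds a solution of $P_{M}u\in C^{\omega}(\T^{2})$ that is exactly $G^{2}$. Freezing at a characteristic point, say $(0,0,0,\eta_{0})$, and using $\sin t=t+O(t^{3})$, the relevant model is $Q=D_{x}^{2}+x^{2}D_{y}^{2}+(yD_{y})^{2}$. Conjugating by $e^{i\lambda y}$, where the frequency $\lambda\sim\eta\to\infty$ is the large parameter, and then rescaling $x,y$ by $\lambda^{1/2}$, the operator $Q$ becomes $\lambda$ times $\big(D_{x}^{2}+x^{2}+y^{2}\big)$ plus corrections of relative size $\lambda^{-1/2},\lambda^{-1},\dots$, the correction at order $\lambda^{-1/2}$ being a first-order operator. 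The leading operator $D_{x}^{2}+x^{2}+y^{2}$ has spectral bottom $1$, which is not an eigenvalue, and everything is governed by the expansion, in powers of $\lambda^{-1/2}$, of the associated quasi-eigenvalue, obtained by projecting onto the oscillator ground state $e^{-x^{2}/2}$ and solving the resulting first-order problem in $y$; this expansion is divergent and of Gevrey class $2$, which is the M\'etivier mechanism. Summing the corresponding quasimodes over $\lambda$ produces a distribution $u$ on $\T^{2}$ whose dual Fourier coefficients decay like $e^{-c|\eta|^{1/2}}$ and no faster, that is $u\in G^{2}(\T^{2})\setminus\bigcup_{s<2}G^{s}(\T^{2})$; after cutting $u$ off near the chosen characteristic point and transplanting it to the torus, the discarded terms, namely $\sin^{2}t-t^{2}$ and the cut-offs, being real analytic, make $P_{M}u$ analytic. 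Hence $P_{M}$ is not globally $G^{s}$ hypoelliptic for any $s<2$.

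\emph{Main obstacle.} The hard part is the sharpness construction. First, $\sin y\,D_{y}$ is not a Fourier multiplier: it couples the $y$-modes $k$ to $k\pm1$, so one cannot reduce to a single ODE in $x$ with $\eta$ frozen, and one must instead handle the genuinely two-dimensional model $Q$ (or resum the full symbol) while checking that the mode-coupling and curvature corrections remain of lower order and do not spoil the sharp $e^{-c|\eta|^{1/2}}$ rate. Second, one must establish the lower bound itself, namely that the solution $u$ is no smoother than $G^{2}$, which requires showing that the analytic errors introduced by the cut-offs and by the approximation $\sin\mapsto\mathrm{id}$ cannot cancel the Gevrey-$2$ singularity built into the quasimode. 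The remaining ingredients---ellipticity off $\Sigma$, the step-$2$ upper bound, and the passage from local to global---are routine.
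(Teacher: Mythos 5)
Your upper bound is fine: ellipticity off the characteristic points, brackets of length two there, the Derridj--Zuily Gevrey refinement of the subelliptic estimate, and the (genuinely routine) passage from local to global $G^{2}$ hypoellipticity. Your quasimode sketch for sharpness is essentially M\'etivier's local construction, which the paper does not re-derive but simply quotes from \cite{metivier81} as a black box, for both the local $G^{2}$ regularity and its optimality. The genuine gap is in your globalization of the counterexample. You assert that ``the cut-offs, being real analytic, make $P_{M}u$ analytic''; but a cut-off equal to $1$ near a characteristic point and to $0$ elsewhere on the connected manifold $\T^{2}$ cannot be real analytic, so $[P_{M},\chi]u$ is at best $C^{\infty}$ (or Gevrey, if you use Gevrey cut-offs), and $P_{M}(\chi u)=\chi\,P_{M}u+[P_{M},\chi]u$ is \emph{not} real analytic. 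With Gevrey-$\sigma$ cut-offs, $1<\sigma<2$, you would obtain failure of global $G^{s}$ hypoellipticity for $1<s<2$, but not failure of global analytic hypoellipticity, which is part of the claim ``and not better.''

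The paper closes exactly this gap with a sheaf-theoretic patching you are missing: since the first cohomology of $\T^{2}$ with coefficients in the sheaf of real-analytic functions vanishes (Cartan, Grauert), the differences of the local M\'etivier solutions on the overlaps of a covering can be split as differences of functions analytic on the full open pieces; subtracting these analytic corrections glues the local solutions into a single distribution $v$ on $\T^{2}$ with $P_{M}v\in C^{\omega}(\T^{2})$ and with nonempty analytic singular support contained in the characteristic points. No cut-off ever multiplies $u$, so no non-analytic error term is created. To repair your argument, either adopt this Cousin-problem patching or replace the cut-off step by a genuinely global construction on the torus (which, as you yourself note, is delicate because $\sin y\,D_{y}$ couples the Fourier modes in $y$). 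A minor point in your favor: the characteristic set consists of the fibers over all four points $(0,0),(0,\pi),(\pi,0),(\pi,\pi)$, as you say, not only over $(0,0)$ and $(\pi,\pi)$ as listed in the paper's proof; this does not change the argument, only the number of local pieces to be glued.
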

We recall the definition of the Gevrey classes:
\begin{definition}
  \label{def:gevrey}
Let $ U $ be an open subset of $ \R^{n} $. 
The space $G^{s}(U)$, $s\geq 1$, the class of Gevrey functions of
order $ s $ in $U$, denotes the set of all $f \in C^{\infty}(U)$ 
such that for every compact set $K \Subset U$ there are two positive
constants $C_{K}$ and $A$ such that for every $\alpha \in
\mathbb{Z}^{n}_{+}$
$$
|D^{\alpha}f(x)| \leq A C^{|\alpha|}_{K} |\alpha|^{s|\alpha|}, \qquad \forall x\in K.
$$
$ G^{s}(\T^{n}) $, the class of the global Gevrey functions of order $
s $ in $ \T^{n} $, denotes the set of all $f \in C^{\infty}(T^{n})$
such that there exists a positive constant $ C $, for which
$$ 
|D^{\alpha}f(x)| \leq C^{|\alpha|+1} |\alpha|^{s|\alpha|}, \qquad
\forall \alpha \in \Z_{+}^{n}, x\in \T^{n}.
$$
\end{definition}
\begin{definition}
\label{def:hyp}
An operator $ P $ is said to be $G^{s}$-hypoelliptic, $s\geq 1$, 
in $U$, open subset of $\mathbb{R}^{n}$, if for any $U'$ open
subset of $U$, the conditions 
$u\in \mathscr{D}'\left(U\right)$ and $Pu \in G^{s}(U)$ imply that $u
\in G^{s}(U')$.

$ P $ is said to be globally $G^{s}$-hypoelliptic, $s\geq 1$, in $
\T^{n} $ if the conditions 
$u\in \mathscr{D}'\left(\T^{n}\right)$ and $Pu \in G^{s}(\T^{n})$
imply that $u \in G^{s}(\T^{n})$.
\end{definition}
\begin{proof}[Proof of Proposition \ref{prop:met}]
First of all we remark that the characteristic variety of $ P $ is $
\Char(P) = \{(0, 0; 0, \eta) \ |\ \eta \neq 0\} \cup \{(\pi, \pi; 0,
\eta) \ |\ \eta \neq 0\} $. 
Let $ U_{1} $ be a neighborhood of the origin in $ \T^{2} $ and $
U_{2} $ a neighborhood of the point $ (\pi, \pi) $ in $ \T^{2}
$. Moreover denote by $ V_{j} $, $ j = 1, 2 $, open neighborhoods of $
(0, 0) $, $ (\pi, \pi) $ respectively in $
\T^{2} $, such that $ V_{j} \Subset U_{j} $. Then $ \{ U_{1}, U_{2},
\T^{2}\setminus (\bar{V}_{1} \cup \bar{V}_{2}) \}$ is an open covering
of $ \T^{2} $.

Using
\cite{metivier81}, we know that any solution in $ U_{j} $ of the equation
$ P_{M}u_{j} = f \in C^{\omega}(U_{j}) $ belongs to $ G^{2}(U_{j}) $ and
is not better than that. Moreover $ u_{1}
\in C^{\omega}(U_{1} \setminus \{(0, 0)\})$  and $ u_{2}
\in C^{\omega}(U_{2} \setminus \{(\pi, \pi)\})$, since $ P_{M} $ is analytic
hypoelliptic on any open set not containing the origin and the points
$ (0, 0) $ and $ (\pi, \pi) $.

Since the cohomology groups with coefficients in the sheaf of real
analytic functions vanish in degree $ \geq 1 $ (see \cite{krantz},
\cite{grauert}, \cite{hcartan}),  we have that, on $ U_{1} \setminus
\bar{V}_{1} $, $ u_{1} = g_{1} - f_{1}
$, where $ g_{1} \in C^{\omega}(U_{1}) $, $ f_{1} \in
C^{\omega}(\T^{2} \setminus (\bar{V}_{1} \cup \bar{V}_{2}) $.
Analogously, on $ U_{2} \setminus \bar{V}_{2} $, we have $ u_{2} =
g_{2} - f_{1} $, where $ g_{2} \in C^{\omega}(U_{2}) $.

Define now $ v = u_{1} - g_{1} $ on $ U_{1} $, $ v = u_{2} - g_{2} $
on $ U_{2} $ and $ v = - f_{1} $ on $ \T^{2} \setminus(\bar{V}_{1}
\cup \bar{V}_{2}) $. We have that $ P_{M} v \in C^{\omega}(\T^{2}) $
and $ \ssp_{a} v \subset \{(0, 0)\} \cup \{(\pi, \pi)\} $ and is
nonempty.

This proves the statement.
\end{proof}
\begin{remark}
\label{rem:metgen}
An analogous proof can be made also for operators vanishing of higher
order on the characteristic variety, e.g.
\begin{equation}
\label{eq:metop2}
P_{1} = D_{x}^{2} + (\sin x)^{2(q-1)} D_{y}^{2} + ((\sin y)^{a} \  D_{y})^{2} ,
\end{equation}
where $ q > 2 $, $ a \geq 2 $. The only difference is that
M\'etivier's theorem \cite{metivier81} has to be replaced by that
proved in \cite{bm-metgen}.

We explicitly point out that both $ P_{M} $ and $ P_{1} $ have a
characteristic variety which is a non symplectic real analitic
manifold, where the Hamilton leaves lie along the $ \eta $ fibers of
the cotangent bundle.

On the other hand there are situations where, even though the
characteristic variety is a symplectic real analitic manifold, still
we have the occurrence of ``strata'' of M\'etivier type, i.e. whose
Hamilton leaves lie along the fibers of the cotangent bundle.
A class of models of this type is given by
\begin{equation}
\label{eq:metgen2}
P_{2} = D_{x}^{2} + (\sin x)^{2(q-1)} D_{y}^{2} + ((\sin x)^{p-1}
(\sin y)^{a} \  D_{y})^{2} , 
\end{equation}
where $ 1 < p < q $, $ a \geq 1 $. The analog of M\'etivier result
\cite{metivier81} is not known in general, however, if $ q = 2p $,
Chinni in \cite{chinni} has proved the optimality of the Gevrey
regularity $ s_{0} $, where 
$$ 
\frac{1}{s_{0}} = 1 - \frac{1}{2a} . 
$$
\end{remark}
It seems reasonable to surmise that the only globally non analytic
hypoelliptic operators are operators where the ``M\'etivier
situation'' occurs, meaning that there is a Hamilton leaf lying along
the fibers of the cotangent bundle.

Unfortunately this poses some major problem even in its formulation,
since we do not know a good way to stratify the characteristic
variety, let alone to define a Hamilton leaf, due to \cite{abm}, at
least in dimension greater or equal to 3.

Just in passing we mention that in special situations we actually are
able to define the strata and we refer to \cite{monom} for this.

\bigskip

In the present paper we consider a class of sums of squares operators
where we think that M\'etivier type situations do not occur.

More precisely let $ n $, $ m $ be two positive integers and $ P
$ be as in \eqref{eq:sosq}. We assume the following:
\begin{itemize}
\item[\textbf{(1)}]{}
The $ X_{j} $ are real analytic vector fields defined on the torus $
\T^{n+m} $. We denote the variable as $ (t, x) $ where $ t \in \T^{n}
$, $ x \in \T^{m} $.
\item[\textbf{(2)}]{}
Let $ n' < n $ and consider $ X_{j}(t, x, D_{t}, D_{x}) $ for $ 1 \leq
j \leq n'$. We assume that
\begin{equation}
\label{eq:nprime}
X_{j} = \sum_{i = 1}^{n'} a_{ji}(t') D_{t_{i}} ,
\end{equation}
where $ t = (t', t'') $ with $ t' \in \T^{n'} $, $ t'' \in \T^{n-n'}
$. Furthermore we assume that the vector fields $ X_{j} $, $ 1 \leq j
\leq n' $, are linearly independent for every $ t' \in \T^{n'} $.
\item[\textbf{(3)}]{}
Consider now $ X_{j} $ for $ n'+1 \leq j \leq N $. We assume that $ N
\geq n $ and that $ X_{j} $ has the form
\begin{equation}
\label{eq:Xj}
X_{j} = a_{j}(t') D_{t_{q(j)}} + \sum_{k=1}^{m} b_{jk}(t) D_{x_{k}} ,
\end{equation}
where $ a_{j} $, $ b_{jk} $ are real analytic funtions defined in $
\T^{n'} $, $ \T^{n} $ respectively and $ q $ is a surjective map from
$ \{ n'+1, \ldots, N\} $ onto $ \{n'+1, \ldots, n\} $.
Hence $ q^{-1}(\{j\}) $ is a partition of $ \{n'+1, \ldots, N\} $ with
non empty subsets.

Furthermore we assume that for each $ j = n'+1, \ldots, n $, there
exists $ \lambda_{j} \in  q^{-1}(\{j\}) $, such that
\begin{equation}
\label{eq:est1}
\sum_{r=n'+1}^{N} \sum_{k=1}^{m} | b_{rk}(t)| \leq C |a_{\ell}(t')| ,
\end{equation}
for every $ \ell \in \{\lambda_{j} \ | \ j \in \{n'+1, \ldots, n\} \}
$.

We also assume that
\begin{equation}
\label{eq:est2}
\sum_{r=n'+1}^{N} | a_{r}(t')| \leq C |a_{\ell}(t')| ,
\end{equation}
for every $ \ell \in \{\lambda_{j} \ | \ j \in \{n'+1, \ldots, n\} \}
$.
\item[\textbf{(4)}]{}
The vector fields $ X_{j} $, $ 1 \leq j \leq N $, satisfy H\"ormander
condition. 
\end{itemize}
\begin{remark}
\label{rem:2}
We could also consider the vector fields above in the case when $ n =
n' $ with no condition \eqref{eq:est1}. Then the corresponding
operator is in a subclass of that considered by Cordaro and Himonas in
\cite{ch94}.
\end{remark}
The vector fields described above can be used to produce the global
analog of some well known examples.

\bigskip
\noindent
\textbf{Example 1.}
Take $ n'=1 $, $ n=2 $, $ m=1 $. Then $ X_{1} = D_{t_{1}} $, $ X_{2} =
D_{t_{2}}$ and $ X_{3} = a(t_{1}) D_{x} $, where $ a $ denotes a non
identically vanishing real analytic function defined on $ \T^{1} $,
give the globally defined version of the (possibly generalized)
Baouendi-Goulaouic operator:
$$ 
D_{t_{1}}^{2} + D_{t_{2}}^{2} + a^{2}(t_{1}) D_{x}^{2} ,
$$
see also \cite{ch94}. We recall that the local version of the
Baouendi-Goulaouic operator is given by $ D_{t_{1}}^{2} +
D_{t_{2}}^{2} + t_{1}^{2k} D_{x}^{2} $, $ k \in \Z_{+} $. 

\par\noindent
\textbf{Example 2.}
Take $ n'=1 $, $ n=2 $, $ m=1 $. Then $ X_{1} = D_{t_{1}} $, $ X_{2} =
a(t_{1}) D_{t_{2}}$ and $ X_{3} = b(t_{1}) D_{x} $, where $ a $, $ b $
are non identically vanishing real analytic functions defined on $
\T^{1} $. Condition \eqref{eq:est1} becomes $ |b(t_{1})| \leq C
|a(t_{1})| $. The corresponding operator
$$ 
D_{t_{1}}^{2} + a^{2}(t_{1}) D_{t_{2}}^{2} + b^{2}(t_{1}) D_{x}^{2} ,
$$
is a globally defined version of
\begin{itemize}
\item[\textit{i)}]{}
the Ole\u \i nik--Radkevi\c c operator if $ b $ vanishes only where $
a $ vanishes. We recall that the local version of the Ole\u \i
nik--Radkevi\c c operator is given by $ D_{t_{1}}^{2} +
t_{1}^{2(p-1)} D_{t_{2}}^{2} + t_{1}^{2(q-1)} D_{x}^{2} $, $ p, q \in
\Z_{+} $, $ 1 < p \leq q $.
\item[\textit{ii)}]{}
the Baouendi-Goulaouic operator if $ b $ vanishes and $ a $ does not.
\item[\textit{iii)}]{}
an elliptic operator if neither $ a $ nor $ b $ vanish.
\end{itemize}
For a generalization of this example we refer to the paper
\cite{chinni14}. 

\par\noindent
\textbf{Example 3.}
Let $ n' = n = 2 $, $ m = 2 $. Then $ X_{1} = D_{t_{1}} $, $ X_{2} =
D_{t_{2}} $, $ X_{3} = a(t_{1}) D_{x_{1}} $, $ X_{4} = a(t_{1})
D_{x_{2}} $, $ X_{5} = b(t_{2}) D_{x_{1}} $, $ X_{6} = c(t_{2})
D_{x_{2}} $, $ a $, $ b $, $ c $ real analytic functions.

Consider the corresponding operator
\begin{equation}
  \label{eq:abm}
D_{t_{1}}^{2} + D_{t_{2}}^{2} + a^{2}(t_{1})\left(D_{x_{1}}^{2} +
  D_{x_{2}}^{2}\right) + b^{2}(t_{2}) D_{x_{1}}^{2} + c^{2}(t_{2})
D_{x_{2}}^{2} .
\end{equation}
Assume that $ a $, $ b $, $ c $ vanish at the origin of order $ r-1 $,
$ p-1 $ and $ q-1 $ respectively, with $ r < p < q $. Then the
operator above is the global version of the operator
$$ 
D_{t_{1}}^{2} + D_{t_{2}}^{2} + t_{1}^{2(r-1)}\left(D_{x_{1}}^{2} +
  D_{x_{2}}^{2}\right) + t_{2}^{2(p-1)} D_{x_{1}}^{2} + t_{2}^{2(q-1)}
D_{x_{2}}^{2},
$$
which has been proved to violate Treves conjecture in \cite{abm}. On
the other hand Chinni in \cite{chinni2} has proved that the above
globally defined operator is analytic hypoelliptic. The operator in
\eqref{eq:abm} is also in the class studied by Cordaro and Himonas,
\cite{ch94}. 

If we choose $ p < q < r $, then the corresponding operator in
\eqref{eq:abm} is again globally analytic hypoelliptic and satisfies
Treves conjecture.

\par\noindent
\textbf{Example 4.}
Let $ n' = 1 $, $ n = 2 $ and $ m = 2 $. Then $ X_{1} = D_{t_{1}} $, $ X_{2} =
a(t_{1}) D_{t_{2}} $, $ X_{3} = b(t_{1}) D_{x_{1}} $, $ X_{4} = b(t_{1})
D_{x_{2}} $, $ X_{5} = c(t_{1}, t_{2}) D_{x_{1}} $, $ X_{6} = d(t_{1},
t_{2}) D_{x_{2}} $, $ a $, $ b $, $ c $ and $ d $ are real analytic
functions.

Consider the corresponding operator
\begin{equation}
  \label{eq:bm}
D_{t_{1}}^{2} + a^{2}(t_{1}) D_{t_{2}}^{2} + b^{2}(t_{1})\left(D_{x_{1}}^{2} +
  D_{x_{2}}^{2}\right) + c^{2}(t_{1}, t_{2}) D_{x_{1}}^{2} +
d^{2}(t_{1}, t_{2}) D_{x_{2}}^{2} .
\end{equation}
Assume that $ a $ does not vanish and also that the operator in
\eqref{eq:bm} is not elliptic. Then condition \eqref{eq:est1} implies
that the operator in \eqref{eq:bm} is a slight generalization of that
in \eqref{eq:abm}. In particular it belongs to the class studied in
\cite{ch94}. 

Assume then that $ a $ vanishes at the origin. Then $ a(t_{1}) =
t_{1}^{\ell} \tilde{a}(t_{1}) $, where $ \ell \in \N $, $ \tilde{a}(0)
\neq 0$. Hence condition \eqref{eq:est1} implies that $ b, c, d =
\mathscr{O}(t_{1}^{\ell}) $.

Assume that $ b = \mathscr{O}(t_{1}^{\ell+r-1}) $ for a certain $ r
\in \N $, $ r > 1 $. Moreover assume that $ t_{1}^{-\ell} c(t) =
\mathscr{O}(t_{2}^{p-1}) $, and that $ t_{1}^{-\ell} d(t) =
\mathscr{O}(t_{2}^{q-1}) $, for certain $ p $, $ q \in \N $, $ 1 < r <
p < q$.

Then the operator in \eqref{eq:bm} is the global analog of the
operator
$$ 
D_{t_{1}}^{2} + t_{1}^{2\ell} D_{t_{2}}^{2} + t_{1}^{2(\ell+r-1)}\left(D_{x_{1}}^{2} +
  D_{x_{2}}^{2}\right) + t_{1}^{2\ell}\Big( t_{2}^{2(p-1)} D_{x_{1}}^{2} + t_{2}^{2(q-1)}
D_{x_{2}}^{2} \Big) ;
$$
it has a symplectic characteristic real analytic manifold having a
symplectic stratification according to Treves. It has been proved to
violate Treves conjecture in \cite{bm}.

This operator does not belong to the class studied in \cite{ch94}.

\par\noindent
\textbf{Example 5.}
Let $ n' = 1 $, $ n = 3 $ and $ m = 1 $. Then $ X_{1} = D_{t_{1}} $, $
X_{2} = a_{2}(t_{1}) D_{t_{2}} $, $ X_{3} = a_{3}(t_{1}) D_{t_{3}} +
b(t) D_{x} $, where $ a_{2} $, $ a_{3} $, $ b $ are real analytic
functions in $ \T^{3} $.

Assume that $ a_{2} = \mathscr{O}(t_{1}^{p-1}) $ for $ t_{1} \to 0 $,
then condition \eqref{eq:est2} implies that $ a_{3} =
\mathscr{O}(t_{1}^{p-1}) $ for $ t_{1} \to 0 $, and condition
\eqref{eq:est1} inplies that $ b = \mathscr{O}(t_{1}^{p-1}) $ for $
t_{1} \to 0 $, i.e. $ b(t) = \mathscr{O}(t_{1}^{p-1}) \tilde{b}(t) $.

For this example, when condition \eqref{eq:est2} is not satisfied, we
do not know a general analytic hypoellipticity result.

\bigskip

We state now the main result of the paper
\begin{theorem}
\label{th:1}
Let $ P $ be a sum of squares operator as in \eqref{eq:sosq} with real
analytic coefficients globally defined on the torus $ \T^{n+m}
$. Assume that conditions \textbf{(2), (3), (4)} above are
satisfied. Then $ P $ is globally analytic hypoelliptic.
\end{theorem}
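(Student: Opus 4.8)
The plan is to prove global analytic hypoellipticity by deriving a priori estimates in analytic-type norms on the torus, working on the Fourier transform side in the $x$-variables. Since $P$ acts on $\T^{n+m}$ and the $x$-dependence enters only through the coefficients $b_{jk}(t)$ multiplying $D_{x_k}$, I would first pass to the partial Fourier series in $x$, writing $u = \sum_{\xi \in \Z^m} \hat u(t,\xi) e^{i x \cdot \xi}$, and study the family of operators $P_\xi$ in the $t$-variables obtained by substituting $D_{x_k} \mapsto \xi_k$. Condition \textbf{(2)} says the first $n'$ fields form an elliptic system in $t'$ with no $x$-dependence and no coefficients, so they contribute a clean $D_{t'}^2$-type term; condition \textbf{(3)} controls the remaining fields so that each $D_{t_{q(j)}}$, $n'+1 \le q(j) \le n$, appears paired with a coefficient $a_j(t')$, and the estimates \eqref{eq:est1}, \eqref{eq:est2} guarantee that the ``$x$-part'' $\sum b_{jk}(t) \xi_k$ and the other ``$t''$-parts'' are all dominated, up to a constant, by a single distinguished coefficient $a_{\lambda_j}(t')$ depending only on $t'$.

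Next I would set up the standard subelliptic/maximal estimate machinery for sums of squares (following the Rothschild--Stein and Derridj--Zuily circle of ideas, and the global versions in the references such as \cite{ch94}, \cite{chinni2}): from H\"ormander's condition \textbf{(4)} one has a subelliptic estimate with some loss $\delta$, and one upgrades it, using the special structure, to a maximal estimate controlling $\sum_j \|X_j v\|^2$ plus lower-order pieces. The point of conditions \textbf{(2)} and \textbf{(3)} is that the geometry is ``layered'': the characteristic set is confined to $\{a_{\lambda_j}(t') = 0\}$-type loci in $t'$ together with the full fiber in the dual variables $\tau'', \xi$, and crucially there is \emph{no} Hamilton leaf running along the $\xi$-fiber alone, because every field carrying $D_{x_k}$ also carries a genuine $t$-derivative $D_{t_{q(j)}}$ with comparable coefficient. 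Concretely, because $\sum_{r,k}|b_{rk}(t)| \le C|a_\ell(t')|$, on the region where the $x$-frequency $|\xi|$ is large the operator $P_\xi$ behaves, in the relevant directions, like a one-dimensional (in $t'$) analytic-hypoelliptic model of Baouendi--Goulaouic / Ole\u\i nik--Radkevi\v c type tensored with an elliptic factor in the remaining $t$-variables — and these models are known to be (globally) analytic hypoelliptic, cf. \cite{ch94}, \cite{chinni14}, \cite{chinni2}, \cite{tartakoff}-type arguments.

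The technical core is then an induction on the order of $t$-derivatives, proving bounds of the form
\begin{equation*}
\| \partial_t^\beta \hat u(\cdot,\xi) \|_{L^2(\T^n)} \le C^{|\beta|+1} |\beta|!\, (1 + |\xi|)^{-\infty}
\end{equation*}
uniformly in $\xi$, with constants independent of $\beta$; summing over $\xi$ and $\beta$ then gives $u \in G^1(\T^{n+m}) = C^\omega(\T^{n+m})$. To run the induction one commutes $\partial_t^\beta$ (and powers of $\xi$, treated as a parameter) through $P_\xi$, estimates the commutators $[\partial_t^\beta, X_j(t,\xi)]$ using the analyticity of the coefficients $a_{ji}, a_j, b_{jk}$ (so that each differentiation costs at most a geometric factor), and absorbs the top-order commutator terms via the maximal estimate; the dominance hypotheses \eqref{eq:est1}--\eqref{eq:est2} are exactly what is needed so that the ``bad'' commutators involving $\xi$ can be re-expressed in terms of the controlled quantities $X_j \hat u$ rather than in terms of $|\xi| \hat u$ directly. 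I expect the main obstacle to be precisely the uniformity in the parameter $\xi$: one must track how the constants in the subelliptic/maximal estimates depend on $\xi$, and show the induction closes with a single geometric constant, which requires a careful rescaling in $t'$ near the degeneracy locus of $a_{\lambda_j}$ (a Métivier-type dichotomy) together with the elliptic gain from the $D_{t'}^2$ block supplied by condition \textbf{(2)}. Handling the global (periodic) setting — rather than a local neighbourhood of a characteristic point — is comparatively mild once the estimates are localized with analytic partitions of unity on $\T^n$ and patched as in the proof of Proposition \ref{prop:met}, using vanishing of analytic cohomology.
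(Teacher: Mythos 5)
Your overall skeleton---maximal/subelliptic a priori estimates, high-order derivatives commuted through $P$, and the dominance conditions \eqref{eq:est1}--\eqref{eq:est2} used to re-express the bad commutator terms through the vector fields---matches the paper's strategy, and the partial Fourier reformulation in $x$ is a legitimate variant of the paper's direct estimation of $\|D_{x_k}^p u\|$ (which the paper obtains almost for free from the exact commutation $[D_{x_k},P]=0$, so that no uniformity-in-$\xi$ analysis and no rescaling near the degeneracy locus of $a_{\lambda_j}$ are needed). However, there is a genuine gap at the step you describe as ``estimate the commutators $[\partial_t^\beta,X_j]$ using the analyticity of the coefficients'' and then ``re-express in terms of the controlled quantities $X_j\hat u$''. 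The commutators $[D_{t_j}^q,X_r]$ for $j\in\{n'+1,\dots,n\}$ produce the \emph{derivatives} $D_{t_j}^{\ell}b_{rk}$, whereas hypothesis \eqref{eq:est1} is only a pointwise bound on $b_{rk}$ itself; a bound $|b_{rk}|\le C|a_{\lambda_j}|$ does not survive differentiation, so you cannot directly replace $(D_{t_j}^{\ell}b_{rk})\,D_{t_j}^{q-\ell}D_{x_k}u$ by a bounded function times $a_{\lambda_j}(t')D_{t_j}(\cdots)$ and hence by $X_{\lambda_j}(\cdots)$ minus already-controlled $x$-derivative terms. Without that replacement the induction that trades $t''$-derivatives for $x$-derivatives does not close.

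The missing ingredient is structural. By Weierstra\ss\ preparation (after a smooth partition of unity in $t'$ and a small linear change of variables) one writes $a_{\lambda_j}(t')=e(t')\,\mathfrak{p}(t')$, and the domination $|b_{rk}|\le C|a_{\lambda_j}|$ forces the Weierstra\ss\ polynomial $\mathfrak{p}$---a function of $t'$ alone---to divide that of $b_{rk}$, i.e. $b_{rk}(t)=\epsilon_{rk}(t)\,\mathfrak{p}(t')\,\mathfrak{q}'(t)$. Since $\mathfrak{p}$ is independent of $t''$, every $t''$-derivative of $b_{rk}$ retains the factor $a_{\lambda_j}(t')$ (with only factorial growth in the remaining factor), and this is exactly what permits writing $a_{\lambda_j}(t')D_{t_j}=X_{\lambda_j}-\sum_k b_{\lambda_j k}D_{x_k}$ and iterating, converting each surplus $D_{t_j}$ into either an $X_{\lambda_j}$ (absorbed by the maximal estimate) or an $x$-derivative already bounded in the first part of the argument. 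You would need to supply this divisibility argument (or an equivalent) for your induction to close; you should also account for the double-commutator term $[[D_{t_j}^q,X_r],X_r]$, which your plan does not mention and which requires condition \eqref{eq:est2} as well. Finally, the patching via vanishing of analytic cohomology that you invoke is used in the paper only for the counterexample of Proposition \ref{prop:met}, not for the positive result, and ``analytic partitions of unity'' do not exist; the a priori estimates here are global on the torus from the start.
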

We point out explicitly that even though $ P $ is globally analytic
hypoelliptic, it is not in general locally analytic hypoelliptic, as
we can see from the above examples.

\section{Proof of Theorem \ref{th:1}}

\setcounter{equation}{0}
\setcounter{theorem}{0}
\setcounter{proposition}{0}
\setcounter{lemma}{0}
\setcounter{corollary}{0}
\setcounter{definition}{0}
\setcounter{remark}{0}

In order to prove the theorem we use the maximal hypoellipticity
global $ L^{2} $ estimate for $ P $.
\begin{equation}
\label{eq:max}
\sum_{j=1}^{N} \| X_{j} u \|_{0}^{2} \leq C \Big( \langle Pu, u
\rangle + \| u\|_{0}^{2} \Big) ,
\end{equation}
where $ u \in C^{\infty}(\T^{n+m}) $. We explicitly remark that the
subelliptic term $ \| u \|_{\epsilon}^{2} $---Sobolev norm of order $
\epsilon $, which is present in the 
estimates proved by H\"ormander and Rothschild and Stein in
\cite{hormander67}, \cite{rothschild-stein76}, is not
needed, since we cannot use it to prove analytic regularity, but only
to prove a Gevrey regularity. 

Since the characteristic variety of $ P $ is contained in
$$
\{(t', t'', x; 0, \tau'', \xi) \ | \ |\tau''| + |\xi| > 0 \} ,
$$
we may use $ t'' $- and $ x $-derivatives to establish real
analyticity. 

Actually we want to bind the quantity
$$ 
\Big ( \sum_{j=n'+1}^{n} D_{t_{j}}^{p} + \sum_{k=1}^{m} D_{x_{k}}^{p}
\Big) u ,
$$
where $ p $ is a large integer and $ u $ is a smooth solution of $ P u
= f $, with $ f \in C^{\omega}(\T^{n+m}) $ and $ P $ is given by
\begin{multline}
\label{eq:P}
P(t, D_{t}, D_{x}) = \sum_{j=1}^{n'} \left(\sum_{i = 1}^{n'}
  a_{ji}(t') D_{t_{i}} \right)^{2}
\\
+ \sum_{j=n'+1}^{N}
\left( a_{j}(t') D_{t_{q(j)}} + \sum_{k=1}^{m} b_{jk}(t) D_{x_{k}}
\right)^{2} .
\end{multline}

\subsection{The $ x $-derivatives.}
Let $ k \in \{1, \ldots, m\} $. We want to estimate $ \| X
D_{x_{k}}^{p} u \|_{0} $, where $ X $ denotes one of the vector fields
$ X_{j} $, $ j=1, \ldots, N $.

To treat the error term on the right hand side of \eqref{eq:max} we
use the subelliptic estimate with a generic subelliptic term. We know
that there exists a positive number, $ \epsilon $, such that the Lie
algebra is generated by brackets of length $ \leq \epsilon^{-1} $ so
that we have
\begin{equation}
\label{eq:subell}
\| u \|_{\epsilon}^{2} +
\sum_{j=1}^{N} \| X_{j} u \|_{0}^{2} \leq C \Big( \langle Pu, u
\rangle + \| u\|_{0}^{2} \Big) .
\end{equation}
Let us start by considering $ \| D_{x_{k}}^{p} u \|_{0} $, i.e. the
error term on the right hand side of \eqref{eq:subell} where $ u $ has
been replaced by $ D_{x_{k}}^{p} u $.

We start off by showing that the error term $\| D_{x_{k}}^{p} u
\|_{0}^{2}$ can actually be absorbed in the l.h.s. of \eqref{eq:subell}. To this end,
denote by $ \chi $ a smooth cutoff function such that $
\chi(t) = 1 $ if $ |t| \geq 2 $ and $ \chi(t) = 0 $ if $ |t|
\leq 1 $. It turns out that $\chi(p^{-1} \langle D \rangle)\in OPS^0$,
where $ \langle D \rangle = (1 + |D|^{2})^{\frac{1}{2}} $. 
(see Def. \ref{def:symb} in  Appendix) and then
\begin{equation}
\label{eq:2nd_cutoff}
\| D_{x_{k}}^{p} u \|_{0} \leq \| (1 -
\chi(p^{-1}\langle D \rangle) ) D_{x_{k}}^{p} u \|_{0}
+ \|\chi(p^{-1} \langle D \rangle)  D_{x_{k}}^{p} u \|_{0} .
\end{equation}
The first summand can be easily estimated, using Proposition
\ref{pseudo}, because the support of the cutoff $ 1 - \chi $ is
contained in $ \{\xi \in \Z^{m} \ | \ |\xi| \leq 2 p\} $.

Whence we obtain
$$
\| (1 - \chi(p^{-1}\langle D \rangle) ) D_{x_{k}}^{p} u \|_{0}
\leq C^{p+1} p^p,
$$
which is an analytic growth estimate.

Thus we are left with the estimate of the second summand in the
r.h.s. of \eqref{eq:2nd_cutoff}. We have that
$$
\|\chi(p^{-1} \langle D \rangle)  D_{x_{k}}^{p} u \|_{0} = p^{-\epsilon}
\| p^{\epsilon}\chi(p^{-1} \langle D \rangle) \langle D
\rangle^{-\epsilon} \langle D \rangle^{\epsilon} D_{x_{k}}^{p} u \|_{0} .
$$
Due to the support of the cutoff $\chi$, we see that
$$
\sigma\big(p^{\epsilon}\chi(p^{-1} \langle D \rangle) \langle D
\rangle^{-\epsilon} \big) = p^{\epsilon }\chi(p^{-1}\langle \xi
\rangle) \langle \xi \rangle^{-\epsilon}
\in S^{0}
$$
with the $S^{0}$-semi-norms uniformly bounded with respect to $p$;
thus the $ L^{2} $ continuity theorem \ref{pseudo} it follows that
$$
\|p^{\epsilon}\chi(p^{-1} \langle D \rangle) \langle D
\rangle^{-\epsilon} \|_{\mathcal{L}(L^2,L^2)}\leq C,
$$
$C$ being a positive constant independent of $p$. Summarizing we
obtained the following inequality
\begin{equation}
\label{eq:error}
\| D_{x_{k}}^{p} u \|_{0} \leq  C^{p+1} p^p + C p^{-\epsilon} \|
D_{x_{k}}^{p} u \|_{\epsilon} .
\end{equation}
Hence, plugging $ D_{x_{k}}^{p}u $ into \eqref{eq:subell}, because of
\eqref{eq:error}, we obtain
\begin{equation}
\label{eq:Dxp-1}
\| D_{x_{k}}^{p} u \|_{\epsilon}^{2} +
\sum_{j=1}^{N} \| X_{j} D_{x_{k}}^{p} u \|_{0}^{2} \leq C \Big(
\langle P D_{x_{k}}^{p} u, D_{x_{k}}^{p} u \rangle + C^{2p} p^{2p} \Big),
\end{equation}
with possibly a larger constant $ C $. Here the term $C p^{-\epsilon} \|
D_{x_{k}}^{p} u \|_{\epsilon} $ has been absorbed on the left hand
side of \eqref{eq:Dxp-1}.

Due to conditions \textbf{(1) -- (3)}, $ D_{x_{k}} $ commutes with $ P
$, so that
\begin{equation}
\label{eq:Dxp-2}
\| D_{x_{k}}^{p} u \|_{\epsilon}^{2} +
\sum_{j=1}^{N} \| X_{j} D_{x_{k}}^{p} u \|_{0}^{2} \leq C \Big(
\| D_{x_{k}}^{p} P u\|_{0}^{2} + \| D_{x_{k}}^{p} u \|_{0}^{2} + C^{2p} p^{2p} \Big).
\end{equation}
Hence, applying \eqref{eq:error} to the next to last term above and
keeping into account that $ Pu $ is an analytic function, we obtain
\begin{equation}
\label{eq:Dxp-3}
\| D_{x_{k}}^{p} u \|_{\epsilon} \leq C^{p+1} p^{p},
\end{equation}
for every $ k = 1, \ldots, m $, i.e. an analytic growth rate with
respect to the variable $ x $.

\subsection{The $ t'' $-derivatives.}
Next we consider $ D_{t_{j}}^{q} $, $ j \in \{n'+1, \ldots, n\} $ and
$ q $ is a large integer. As above from \eqref{eq:subell} we deduce
\begin{equation}
\label{eq:Dtq-1}
\| D_{t_{j}}^{q} u \|_{\epsilon}^{2} + \sum_{r=1}^{N} \| X_{r}
D_{t_{j}}^{q} u \|_{0}^{2} \leq C \Big( \langle P D_{t_{j}}^{q} u,
D_{t_{j}}^{q} u \rangle +  \| D_{t_{j}}^{q} u \|_{0}^{2} \Big) .
\end{equation}
Applying \eqref{eq:error} to the second term on the right hand side of
the above inequality we obtain, with a possibly larger constant,
\begin{equation}
\label{eq:Dtq-1}
\| D_{t_{j}}^{q} u \|_{\epsilon}^{2} + \sum_{r=1}^{N} \| X_{r}
D_{t_{j}}^{q} u \|_{0}^{2} \leq C \Big( \langle P D_{t_{j}}^{q} u,
D_{t_{j}}^{q} u \rangle +  C^{2q} q^{2q} \Big) .
\end{equation}
Consider now $ \langle P D_{t_{j}}^{q} u, D_{t_{j}}^{q} u \rangle
$. We have
\begin{multline*}
\langle P D_{t_{j}}^{q} u, D_{t_{j}}^{q} u \rangle = \sum_{r=1}^{N}
\langle X_{r}^{2} D_{t_{j}}^{q} u,  D_{t_{j}}^{q} u \rangle
\\
=
\sum_{r=1}^{N} \Big(
\langle  D_{t_{j}}^{q} X_{r}^{2} u,  D_{t_{j}}^{q} u \rangle
+
\langle [ X_{r}^{2} , D_{t_{j}}^{q} ] u,  D_{t_{j}}^{q} u \rangle
\Big) .
\end{multline*}
Now
$$ 
[ X_{r}^{2} , D_{t_{j}}^{q} ] = 2 X_{r} [ X_{r} , D_{t_{j}}^{q} ] - [
[  D_{t_{j}}^{q} , X_{r} ] , X_{r} ].
$$
Let us examine first $ \langle X_{r} [ X_{r} , D_{t_{j}}^{q} ] u,
D_{t_{j}}^{q} u \rangle $, for $ r= 1, \ldots, N $. If $ r \in \{1,
\ldots, n'\} $ then the commutator is zero, so that we have to
consider the case $ r \in \{n'+1, \ldots, N\} $. Since in this case $
X_{r} $ is a self adjoint vector field, we have
$$ 
\langle X_{r} [ X_{r} , D_{t_{j}}^{q} ] u, D_{t_{j}}^{q} u \rangle =
\langle  [ X_{r} , D_{t_{j}}^{q} ] u, X_{r} D_{t_{j}}^{q} u \rangle.
$$
By Cauchy-Schwartz we get
$$ 
\left| \langle X_{r} [ X_{r} , D_{t_{j}}^{q} ] u, D_{t_{j}}^{q} u
  \rangle \right| \leq \delta \| X_{r} D_{t_{j}}^{q} u \|_{0}^{2} +
C_{\delta} \| [ X_{r} , D_{t_{j}}^{q} ] u \|_{0}^{2} .
$$
Choosing $ \delta $ in a convenient way allows us to absorb the first
norm on the right hand side above on the left hand side of
\eqref{eq:Dtq-1}.

Thus
$$ 
[ D_{t_{j}}^{q}, X_{r} ] = \sum_{k=1}^{m} [ D_{t_{j}}^{q} , b_{rk}(t)
] D_{x_{k}}
=
\sum_{k=1}^{m} \sum_{\ell=1}^{q} \binom{q}{\ell} (\ad
D_{t_{j}})^{\ell}(b_{rk}) D_{t_{j}}^{q-\ell} D_{x_{k}} .
$$
Hence
\begin{equation}
  \label{eq:comm}
\| [ X_{r} , D_{t_{j}}^{q} ] u \|_{0} \leq \sum_{k=1}^{m}
\sum_{\ell=1}^{q} \binom{q}{\ell} \| (D_{t_{j}}^{\ell}b_{rk})
D_{t_{j}}^{q-\ell} D_{x_{k}} u \|_{0} .
\end{equation}
By condition \eqref{eq:est1} each coefficient $ |b_{rk}| \leq C
|a_{\ell}| $, for every $ \ell \in \{ \lambda_{j} \ | \ j \in \{n'+1,
\ldots, n\}\} $ and for every $ k \in \{1, \ldots, m\} $, $ r \in
\{n'+1, \ldots, N\} $.

If the $ a_{\ell} $ do not vanish on $ \T^{n'} $ for $ \ell \in \{
\lambda_{j} \ | \ j \in \{n'+1, \ldots, n\}\} $, we do nothing.

Assume that $ a_{\ell}^{-1}(0) \neq \varnothing $. For this we need a
partition of unity in $ \T^{n'} $. Let $ U_{h} $, $ h = 1, \ldots, M $,
denote an open covering of $ \T^{n'} $ and let $ \phi_{h}(t') \in
C_{0}^{\infty}(U_{h}) $, $ \sum_{h=1}^{M} \phi_{h}(t') = 1 $, be a
partition of unity subordinated to the covering $ U_{h} $.  

We are going to estimate $ \| \phi_{h} [ X_{r} , D_{t_{j}}^{q} ] u
\|_{0}  $, $ h = 1, \ldots, M $.

We want to discuss the restriction of $ b_{rk} $ to $ U_{h} $. For the
sake of simplicity we argue for $ U_{h} $ when the origin belongs to $
U_{h} $ and we assume that $ a_{\ell}^{-1}(0) \cap U_{h} \neq
\varnothing $. We may always assume that $ a_{\ell}(0) = 0 $, since
the estimate we obtain is independent of $ h $.

Without loss of generality we may also assume that $ a_{\ell}(t_{1},
0) $ has an isolated zero of multiplicity $ p $ at zero. Then by the
Weierstra\ss\ preparation theorem, see e.g. \cite{treves-book}, we may
write
$$ 
a_{\ell}(t') = e_{\ell}(t') \frak{p}_{\ell}(t_{1}, \tilde{t}) ,
$$
where
$$ 
\frak{p}_{\ell}(t_{1}, \tilde{t}) = t_{1}^{p} + \sum_{j=1}^{p} a_{\ell
  j}(\tilde{t}) t_{1}^{p-j},
$$
where $ \tilde{t} = (t_{2}, \ldots, t_{n'}) $, $ e_{\ell}(t') \in
C^{\omega}(U_{h}) $ nowhere vanishing and $ a_{\ell j}(\tilde{t}) $
are real analytic functions vanishing at $ \tilde{t} = 0 $.

Consider now $ b_{rk}(t_{1}, 0, t'') $. It vanishes when $ t_{1} = 0
$, for every $ t'' \in \T^{n''} $ due to our assumptions. We may
always perform a small linear change of the variables $ t' $ in such a
way that both $ a_{\ell}(t_{1}, 0) $ and  $ b_{rk}(t_{1}, 0, t'') $
have an isolated zero at $ t_{1} = 0 $, even though of different
multiplicities.

Then by the Weierstra\ss\ preparation theorem we may write
$$ 
b_{rk}(t', t'') = \epsilon_{rk}(t', t'') \frak{q}_{rk}(t_{1}, \tilde{t}, t''), 
$$
where
$$ 
\frak{q}_{rk}(t_{1}, \tilde{t}, t'') = t_{1}^{q} + \sum_{j=1}^{q} b_{kr
  j}(\tilde{t}, t'') t_{1}^{q-j} .
$$
Our assumption that $ | b_{rk} | \leq C | a_{\ell} | $ imply that $ |
\frak{q}_{rk} | \leq C_{1} | \frak{p}_{\ell}| $. This implies that $ q
\geq p$ and that $ \frak{p}_{\ell} $ divides $ \frak{q}_{rk} $, i.e.
\begin{equation}
\label{eq:brk}
b_{rk}(t) = \epsilon_{rk}(t) \frak{p}_{\ell}(t_{1}, \tilde{t})
\frak{q}'(t_{1}, \tilde{t}, t''),
\end{equation}
for every $ r \in \{n'+1, \ldots, N\} $, $ k \in \{1, \ldots, m\} $
and $ \ell \in \{\lambda_{j} \ | \ j \in \{n'+1, \ldots, n\}\} $. Here
$ \frak{q}' $ denotes a Weierstra\ss\ polynomial of degree $ q-p $.

We note explicitly that the dependence on $ t'' $ of the functions $
b_{rk} $ is confined to the unity $ \epsilon_{rk} $ and the
coefficients of $ \frak{q}' $.

As a consequence of the above argument we have that
\begin{equation}
\label{eq:Dbrk}
D_{t_{j}}^{\ell_{1}} b_{rk}(t) = D_{t_{j}}^{\ell_{1}}
(\epsilon_{rk}(t) \frak{q}'(t_{1}, \tilde{t}, t''))
\frak{p}_{\ell}(t_{1}, \tilde{t}). 
\end{equation}
Going back to \eqref{eq:comm}, we rewrite \eqref{eq:Dbrk} as
\begin{multline}
  \label{eq:Dellb}
D_{t_{j}}^{\ell} b_{rk}(t) = D_{t_{j}}^{\ell}
(\epsilon_{rk}(t) \frak{q}'(t_{1}, \tilde{t}, t''))
\frak{p}_{\lambda_{j}}(t_{1}, \tilde{t})
\\
= D_{t_{j}}^{\ell}
(\epsilon_{rk}(t) \frak{q}'(t_{1}, \tilde{t}, t''))
e_{\lambda_{j}}(t')^{-1} a_{\lambda_{j}}(t') .
\end{multline}
Then
\begin{multline*}
\| \phi_{h} [ X_{r} , D_{t_{j}}^{q} ] u \|_{0}
\\
\leq \sum_{k=1}^{m}
\sum_{\ell=1}^{q} \binom{q}{\ell} \| \phi_{h} D_{t_{j}}^{\ell}
(\epsilon_{rk}(t) \frak{q}'(t_{1}, \tilde{t}, t''))
e_{\lambda_{j}}(t')^{-1} a_{\lambda_{j}}(t') 
D_{t_{j}}^{q-\ell} D_{x_{k}} u \|_{0} .
\end{multline*}
There are two cases: the first is $ \ell = q $ and the second is $
\ell < q $. In the first case we obtain
\begin{multline}
\label{eq:ell=q}
\sum_{k=1}^{m} \| \phi_{h} D_{t_{j}}^{q}
(\epsilon_{rk}(t) \frak{q}'(t_{1}, \tilde{t}, t''))
e_{\lambda_{j}}(t')^{-1} a_{\lambda_{j}}(t') D_{x_{k}} u \|_{0}
\\
\leq
\sum_{k=1}^{m} C_{1}^{q+1} q! \| D_{x_{k}} u \|_{0} \leq C_{2}^{q+1}
q! \| u \|_{0} .
\end{multline}
Consider the second case
\begin{multline}
\label{eq:ell<q}
\sum_{k=1}^{m}
\sum_{\ell=1}^{q-1} \binom{q}{\ell} \| \phi_{h} D_{t_{j}}^{\ell}
(\epsilon_{rk}(t) \frak{q}'(t_{1}, \tilde{t}, t''))
e_{\lambda_{j}}(t')^{-1} a_{\lambda_{j}}(t') 
D_{t_{j}}^{q-\ell} D_{x_{k}} u \|_{0}
\\
\leq
\sum_{k=1}^{m}
\sum_{\ell=1}^{q-1} C_{2}^{\ell+1} q^{\ell}  \| a_{\lambda_{j}}(t') D_{t_{j}} 
D_{t_{j}}^{q-\ell-1} D_{x_{k}} u \|_{0}
\\
\leq
\sum_{k=1}^{m}
\sum_{\ell=1}^{q-1} C_{2}^{\ell+1} q^{\ell}  \| X_{\lambda_{j}} 
D_{t_{j}}^{q-\ell-1} D_{x_{k}} u \|_{0}
\\
+
\sum_{k=1}^{m}
\sum_{\ell=1}^{q-1} \sum_{k_{1}=1}^{m}  C_{2}^{\ell+1} q^{\ell}  \|
b_{\lambda_{j} k_{1}}  
D_{t_{j}}^{q-\ell-1} D_{x_{k_{1}}} D_{x_{k}} u \|_{0} .
\end{multline}
The first term is ready for an induction and hence we have to consider
the last term above: by assumption \eqref{eq:est1} we may write
\begin{multline*}
\sum_{k=1}^{m}
\sum_{\ell=1}^{q-1} \sum_{k_{1}=1}^{m}  C_{2}^{\ell+1} q^{\ell}  \|
b_{\lambda_{j} k_{1}}  
D_{t_{j}}^{q-\ell-1} D_{x_{k_{1}}} D_{x_{k}} u \|_{0}
\\
\leq
C \sum_{k=1}^{m}\sum_{k_{1}=1}^{m} C_{2}^{q} q^{q-1}  \| D_{x_{k_{1}}}
D_{x_{k}} u \|_{0} 
\\
+
C
\sum_{k=1}^{m}
\sum_{\ell=1}^{q-2} \sum_{k_{1}=1}^{m}  C_{2}^{\ell+1} q^{\ell}  \|
a_{\lambda_{j}} D_{t_{j}} 
D_{t_{j}}^{q-\ell-2} D_{x_{k_{1}}} D_{x_{k}} u \|_{0}
\\
\leq
C \sum_{k=1}^{m}\sum_{k_{1}=1}^{m} C_{2}^{q} q^{q-1}  \| D_{x_{k_{1}}}
D_{x_{k}} u \|_{0}
\\
+
C
\sum_{k=1}^{m}
\sum_{\ell=1}^{q-2} \sum_{k_{1}=1}^{m}  C_{2}^{\ell+1} q^{\ell}  \|
X_{\lambda_{j}} D_{t_{j}}^{q-\ell-2} D_{x_{k_{1}}} D_{x_{k}} u \|_{0}
\\
+
C
\sum_{k=1}^{m}
\sum_{\ell=1}^{q-2} \sum_{k_{1}=1}^{m} \sum_{k_{2}=1}^{m} C_{2}^{\ell+1} q^{\ell}  \|
b_{\lambda_{j} k_{2}}  D_{t_{j}}^{q-\ell-2} D_{x_{k_{2}}}
D_{x_{k_{1}}} D_{x_{k}} u \|_{0} .
\end{multline*}
Again the first term yields an analytic growth rate, the second is
ready for an induction. For the last term
above we may iterate $ q-1 $ times the procedure obtaining
\begin{multline*}
\sum_{k=1}^{m}
\sum_{\ell=1}^{q-1} \binom{q}{\ell} \| \phi_{h} D_{t_{j}}^{\ell}
(\epsilon_{rk}(t) \frak{q}'(t_{1}, \tilde{t}, t''))
e_{\lambda_{j}}(t')^{-1} a_{\lambda_{j}}(t') 
D_{t_{j}}^{q-\ell} D_{x_{k}} u \|_{0}
\\
\leq
\sum_{i=1}^{q-1} \sum_{k=1}^{m} \sum_{k_{1}=1}^{m}  \cdots
\sum_{k_{i-1}=1}^{m} C_{3}^{q+1} q^{q-i+1} \|D_{x_{k}} D_{x_{k_{1}}}
\cdots D_{x_{k_{i-1}}} u \|_{0}
\\
+
\sum_{k=1}^{m} \sum_{k_{1}=1}^{m}  \cdots \sum_{k_{q-2}=1}^{m}
C_{3}^{q+1} q \| X_{\lambda_{j}} D_{x_{k}} D_{x_{k_{1}}}
\cdots D_{x_{k_{q-2}}} u \|_{0}
\\
+
\sum_{k=1}^{m} \sum_{k_{1}=1}^{m}  \cdots \sum_{k_{q-1}=1}^{m}
C_{3}^{q+1} q \| D_{x_{k}} D_{x_{k_{1}}} \cdots D_{x_{k_{q-1}}} u
\|_{0} .
\end{multline*}
Consider now the second summation on the right hand side above. Using
\eqref{eq:Dxp-2}, \eqref{eq:Dxp-3}, and remarking that in those
inequalities one may swap the derivative $ D_{x_{k}}^{p} $ with $
D_{x}^{\alpha} $, where $ \alpha $ is a multliindex with $ |\alpha|=p
$, we have that
\begin{equation}
\label{eq:s1}
\sum_{k=1}^{m} \sum_{k_{1}=1}^{m}  \cdots \sum_{k_{q-2}=1}^{m}
C_{3}^{q+1} q \| X_{\lambda_{j}} D_{x_{k}} D_{x_{k_{1}}}
\cdots D_{x_{k_{q-2}}} u \|_{0}
\leq
C_{4}^{q+1} q^{q} .
\end{equation}
The first sum and the third can be rewritten as
$$ 
\sum_{i=1}^{q} \sum_{k=1}^{m} \sum_{k_{1}=1}^{m}  \cdots
\sum_{k_{i-1}=1}^{m} C_{3}^{q+1} q^{q-i+1} \|D_{x_{k}} D_{x_{k_{1}}}
\cdots D_{x_{k_{i-1}}} u \|_{0} .
$$
Now, by \eqref{eq:error}, we have, for a multiindex $ \alpha $,
$$ 
\| D_{x}^{\alpha} u \|_{0} \leq  C^{|\alpha|+1} |\alpha|^{|\alpha|} + C |\alpha|^{-\epsilon} \|
D_{x}^{\alpha} u \|_{\epsilon} .
$$
Hence, using \eqref{eq:Dxp-3}, we get
\begin{equation}
  \label{eq:s2}
\sum_{i=1}^{q} \sum_{k=1}^{m} \sum_{k_{1}=1}^{m}  \cdots
\sum_{k_{i-1}=1}^{m} C_{3}^{q+1} q^{q-i+1} \|D_{x_{k}} D_{x_{k_{1}}}
\cdots D_{x_{k_{i-1}}} u \|_{0} \leq C_{4}^{q+1} q^{q} ,
\end{equation}
for a suitable positive constant $ C_{4} $.

Summing over $ h $ the above estimate implies that
\begin{equation}
\label{eq:s3}
\| [ X_{r} , D_{t_{j}}^{q} ] u \|_{0} \leq C_{4}^{q+1} q^{q} ,
\end{equation}
with a slightly larger constant.

Consider next the term, with $ r \in \{n'+1, \ldots, N\} $, $ j =
n'+1, \ldots, n $,
$$ 
\langle [ [  D_{t_{j}}^{q} , X_{r} ] , X_{r} ] u, D_{t_{j}}^{q} u
\rangle .
$$
We have
\begin{multline*}
[ [  D_{t_{j}}^{q} , X_{r} ], X_{r} ]
=
\sum_{k=1}^{m} \sum_{\ell=1}^{q} \binom{q}{\ell} [ (\ad
D_{t_{j}})^{\ell}(b_{rk}) D_{t_{j}}^{q-\ell}  , X_{r} ] D_{x_{k}}
\\
=
\sum_{k=1}^{m} \sum_{\ell=1}^{q} \binom{q}{\ell} [ 
(D_{t_{j}}^{\ell}b_{rk}) , X_{r} ] D_{t_{j}}^{q-\ell} D_{x_{k}}
\\
+
\sum_{k=1}^{m} \sum_{\ell=1}^{q} \sum_{k_{1}=1}^{m}
\sum_{\ell_{1}=1}^{q-\ell} \binom{q}{\ell} \binom{q-\ell}{\ell_{1}}
(D_{t_{j}}^{\ell}b_{rk}) (\ad D_{t_{j}})^{\ell_{1}}(b_{rk_{1}})
D_{t_{j}}^{q-\ell-\ell_{1}} D_{x_{k}} D_{x_{k_{1}}}
\\
=
- \sum_{k=1}^{m} \sum_{\ell=1}^{q} \binom{q}{\ell} 
a_{r}(t') ( D_{t_{q(r)}} D_{t_{j}}^{\ell}b_{rk})  D_{t_{j}}^{q-\ell}
D_{x_{k}}
\\
+
\sum_{k=1}^{m} \sum_{\ell=1}^{q} \sum_{k_{1}=1}^{m}
\sum_{\ell_{1}=1}^{q-\ell} \binom{q}{\ell} \binom{q-\ell}{\ell_{1}}
(D_{t_{j}}^{\ell}b_{rk}) (D_{t_{j}}^{\ell_{1}} b_{rk_{1}})
D_{t_{j}}^{q-\ell-\ell_{1}} D_{x_{k}} D_{x_{k_{1}}}
\\
=
J_{1} + J_{2}.
\end{multline*}
Let us examine $ J_{1} $ first. We have
\begin{multline*}
\langle J_{1} u, D_{t_{j}}^{q} u \rangle =
- \sum_{k=1}^{m} \sum_{\ell=1}^{q} \binom{q}{\ell}
\langle ( D_{t_{q(r)}} D_{t_{j}}^{\ell}b_{rk})  D_{t_{j}}^{q-\ell}
D_{x_{k}} u , a_{r}(t') D_{t_{j}}^{q} u \rangle
\\
=
- \sum_{k=1}^{m} \langle ( D_{t_{q(r)}} D_{t_{j}}^{q}b_{rk})
D_{x_{k}} u , a_{r}(t') D_{t_{j}}^{q} u \rangle
\\
- \sum_{k=1}^{m} \sum_{\ell=1}^{q-1} \binom{q}{\ell}
\langle ( D_{t_{q(r)}} D_{t_{j}}^{\ell}b_{rk})  D_{t_{j}}^{q-\ell -1}
D_{x_{k}} u , a_{r}(t') D_{t_{j}}^{q+1} u \rangle
\\
+ \sum_{k=1}^{m} \sum_{\ell=1}^{q-1} \binom{q}{\ell}
\langle ( D_{t_{q(r)}} D_{t_{j}}^{\ell+1}b_{rk})  D_{t_{j}}^{q-\ell -1}
D_{x_{k}} u , a_{r}(t') D_{t_{j}}^{q} u \rangle
\\
= J_{11} + J_{12} + J_{13} .
\end{multline*}
Consider $ J_{11} $. We have that
$$ 
|J_{11}| \leq C_{0}^{q+1} q! \sup_{k} \|D_{x_{k}} u \|_{0}
\|D_{t_{j}}^{q} u\|_{0} ,
$$
which gives an analytic growth rate by using an analog of
\eqref{eq:error}. Consider then $ J_{12} $. For the left hand side
factor of the scalar product we apply assumption \eqref{eq:est1} and
proceed analogously to \eqref{eq:ell<q}. For the right hand side
factor of the scalar product we apply assumption \eqref{eq:est2}.  We
obtain
\begin{multline}
  \label{eq:J12}
| J_{12} | \leq
\sum_{k=1}^{m} \sum_{\ell=1}^{q-1} \binom{q}{\ell} C_{b}^{\ell+2}
(\ell+1)!  \| X_{\lambda_{j}}  D_{t_{j}}^{q-\ell -2} D_{x_{k}} u
\|_{0} \|X_{\lambda_{j}} D_{t_{j}}^{q} u\|_{0}
\\
+
\sum_{k=1}^{m} \sum_{k_{1}=1}^{m}  \sum_{\ell=1}^{q-1} \binom{q}{\ell} C_{b}^{\ell+2}
(\ell+1)! \| b_{\lambda_{j} k_{1}}  D_{t_{j}}^{q-\ell -2} D_{x_{k}} D_{x_{k_{1}}} u
\|_{0} \|X_{\lambda_{j}} D_{t_{j}}^{q} u\|_{0}
\\
+
\sum_{k=1}^{m} \sum_{k_{1}=1}^{m}  \sum_{\ell=1}^{q-1} \binom{q}{\ell} C_{b}^{\ell+2}
(\ell+1)!  \| X_{\lambda_{j}}  D_{t_{j}}^{q-\ell -2} D_{x_{k}} u
\|_{0} \| b_{\lambda_{j} k_{1}}   D_{t_{j}}^{q} D_{x_{k_{1}}} u\|_{0}
\\
+
\sum_{k=1}^{m} \sum_{k_{1}=1}^{m}  \sum_{k'_{1}=1}^{m} \sum_{\ell=1}^{q-1} \binom{q}{\ell} C_{b}^{\ell+2}
(\ell+1)!  \|  b_{\lambda_{j} k'_{1}}  D_{t_{j}}^{q-\ell -2} D_{x_{k}}
D_{x_{k'_{1}}}u \|_{0}
\\
\cdot
\| b_{\lambda_{j} k_{1}}   D_{t_{j}}^{q}
D_{x_{k_{1}}} u\|_{0} .
\end{multline}
As for the first summand we observe that
$$ 
\binom{q}{\ell} (\ell+1)! \leq q^{\ell+1} ,
$$
so that the norm $ q^{\ell+1} \| X_{\lambda_{j}}  D_{t_{j}}^{q-\ell
  -2} D_{x_{k}} u \|_{0} $ yields an analytic growth rate, since the
norm $ \|X_{\lambda_{j}} D_{t_{j}}^{q} u\|_{0} $ can be absorbed on
the left hand side of the a priori estimate.

Let us consider the other summands in the above inequality. Observe
that the norms appearing in the sums are of the same type as those on
the right hand side of \eqref{eq:ell<q}. Hence arguing in the same way
we can conclude as we did for the case of the simple commutator (see
the argument preceding \eqref{eq:s2}.)

Consider then $ J_{13} $. It is a lower order term due to the fact
that one derivative landed onto $ b_{rk} $. Thus its treatment is
completely analogous to that of $ J_{12} $, but simpler once
\eqref{eq:error} is used to absorb on the left hand side $ q^{-\epsilon} \|
D_{t_{j}}^{q} u \|_{\epsilon} $.

Finally we have to examine $ J_{2} $:
\begin{multline*}
  \langle J_{2}u, D_{t_{j}}^{q} u \rangle
  =
\sum_{k=1}^{m} \sum_{\ell=1}^{q-1} \sum_{k_{1}=1}^{m}
\sum_{\ell_{1}=1}^{q-\ell} \binom{q}{\ell} \binom{q-\ell}{\ell_{1}}
\\
\cdot
\langle (D_{t_{j}}^{\ell}b_{rk}) (D_{t_{j}}^{\ell_{1}} b_{rk_{1}})
D_{t_{j}}^{q-\ell-\ell_{1}} D_{x_{k}} D_{x_{k_{1}}} u,  D_{t_{j}}^{q}
u \rangle + \left(C_{0}^{q+1} q!\right)^{2} + \delta \|D_{t_{j}}^{q} u
\|_{\epsilon}^{2},
\end{multline*}
where $ \delta $ is a small constant.

Next we are going to forget about the last two summands in the above
relation.

We start off by bringing a $ x $-derivative to the right hand side of
the scalar product and $ \ell_{1} $ $ t_{j} $-derivatives to the left,
so that
\begin{multline*}
  \langle J_{2}u, D_{t_{j}}^{q} u \rangle
  =
\sum_{k=1}^{m} \sum_{\ell=1}^{q-1} \sum_{k_{1}=1}^{m}
\sum_{\ell_{1}=1}^{q-\ell} \binom{q}{\ell} \binom{q-\ell}{\ell_{1}}
\\
\cdot
\langle (D_{t_{j}}^{\ell}b_{rk}) D_{t_{j}}^{q-\ell-\ell_{1}} D_{x_{k}}
u, (D_{t_{j}}^{\ell_{1}} b_{rk_{1}}) D_{t_{j}}^{q} D_{x_{k_{1}}} u
\rangle
\\
=
\sum_{k=1}^{m} \sum_{\ell=1}^{q-1} \sum_{k_{1}=1}^{m}
\sum_{\ell_{1}=1}^{q-\ell} \sum_{s=0}^{\ell_{1}}
\sum_{\sigma=0}^{\ell_{1}-s} (-1)^{s} 
\binom{q}{\ell} \binom{q-\ell}{\ell_{1}} \binom{\ell_{1}}{s}
\binom{\ell_{1}-s}{\sigma} 
\\
\cdot
\langle (D_{t_{j}}^{\ell+\sigma}b_{rk}) D_{t_{j}}^{q-\ell-s-\sigma} D_{x_{k}}
u, (D_{t_{j}}^{\ell_{1}+s} b_{rk_{1}}) D_{t_{j}}^{q-\ell_{1}} D_{x_{k_{1}}} u
\rangle ,
\end{multline*}
where the adjoint Newton binomial formula has been used.
Hence
\begin{multline*}
| \langle J_{2}u, D_{t_{j}}^{q} u \rangle |
\leq
\\
\sum_{k=1}^{m} \sum_{\ell=1}^{q-1} \sum_{k_{1}=1}^{m}
\sum_{\ell_{1}=1}^{q-\ell} \sum_{s=0}^{\ell_{1}}
\sum_{\sigma=0}^{\ell_{1}-s}
\frac{q!}{\ell!} \frac{1}{(q-\ell-\ell_{1})!} \frac{1}{s!}
\frac{1}{\sigma! (\ell_{1} - s - \sigma)!}
\\
\cdot
\| (D_{t_{j}}^{\ell+\sigma}b_{rk}) D_{t_{j}}^{q-\ell-s-\sigma} D_{x_{k}}
u \|_{0} \cdot
\| (D_{t_{j}}^{\ell_{1}+s} b_{rk_{1}}) D_{t_{j}}^{q-\ell_{1}}
D_{x_{k_{1}}} u \|_{0} .
\end{multline*}
By \eqref{eq:Dellb} we obtain
\begin{multline*}
| \langle J_{2}u, D_{t_{j}}^{q} u \rangle |
\leq
\\
\sum_{k=1}^{m} \sum_{\ell=1}^{q-1} \sum_{k_{1}=1}^{m}
\sum_{\ell_{1}=1}^{q-\ell} \sum_{s=0}^{\ell_{1}}
\sum_{\sigma=0}^{\ell_{1}-s} C_{b}^{\ell+\ell_{1}+s+\sigma+1}
\frac{q!}{\ell!} \frac{1}{(q-\ell-\ell_{1})!} \frac{1}{s!}
\frac{1}{\sigma! (\ell_{1} - s - \sigma)!}
\\
\cdot
(\ell+\sigma)!
(\ell_{1}+s)! 
\| a_{\lambda_{j}}(t') D_{t_{j}}^{q-\ell-s-\sigma} D_{x_{k}}
u \|_{0} \cdot
\| a_{\lambda_{j}}(t') D_{t_{j}}^{q-\ell_{1}}
D_{x_{k_{1}}} u \|_{0}
\\
\leq
\sum_{k=1}^{m} \sum_{\ell=1}^{q-1} \sum_{k_{1}=1}^{m}
\sum_{\ell_{1}=1}^{q-\ell} \sum_{s=0}^{\ell_{1}}
\sum_{\sigma=0}^{\ell_{1}-s} C_{1}^{\ell+\ell_{1}+s+\sigma+1}
q^{\ell+s+\sigma}
\\
\cdot
\| a_{\lambda_{j}}(t') D_{t_{j}} D_{t_{j}}^{q-\ell-s-\sigma-1} D_{x_{k}}
u \|_{0} \cdot q^{\ell_{1}}
\| a_{\lambda_{j}}(t') D_{t_{j}}  D_{t_{j}}^{q-\ell_{1}-1}
D_{x_{k_{1}}} u \|_{0} .
\end{multline*}
Then we may argue for each factor above as in \eqref{eq:ell<q} to get
analytic growth rate.

This ends the estimate of the left hand side of \eqref{eq:Dtq-1} for $
j \in \{n'+1, \ldots, n\} $. This implies that
\begin{equation}
\label{eq:fin}
| \Delta_{(t, x)}^{q} u | \leq C^{2q+1} q!^{2} , \text{ on } \T^{n+m} ,
\end{equation}
which proves our statement.

\section{Further thoughts on Example 5}

\setcounter{equation}{0}
\setcounter{theorem}{0}
\setcounter{proposition}{0}
\setcounter{lemma}{0}
\setcounter{corollary}{0}
\setcounter{definition}{0}
\setcounter{remark}{0}

Let us consider Example 5 without assuming condition \eqref{eq:est2}:
\begin{equation}
\label{eq:5}
D_{t_{1}}^{2} + a_{2}(t_{1})^{2} D_{t_{2}}^{2} + (a_{3}(t_{1})
D_{t_{3}} + b(t) D_{x})^{2} ,
\end{equation}
where
$$ 
a_{2}(t_{1}) = \mathscr{O}(t_{1}^{p-1}) \text{ for } t_{1} \to 0,
$$
and assuming that
$$ 
b(t) = \mathscr{O}(t_{1}^{k}) \tilde{b}(t) \text{ for } t_{1} \to 0,
\text{ where } k \geq 2(p-1) .
$$
The proof of the global analytic hypoellipticity of \eqref{eq:5}
differs from the above proof only in the estimate of the double
commutator term. 

More precisely the estimate of the right hand side factor of the
scalar product in \eqref{eq:J12} can be obtained by using the above
assumption on the function $ b $ instead that condition
\eqref{eq:est2}.

\setcounter{section}{0}
\renewcommand\thesection{\Alph{section}}

\section{Appendix}

\setcounter{equation}{0}
\setcounter{theorem}{0}
\setcounter{proposition}{0}
\setcounter{lemma}{0}
\setcounter{corollary}{0}
\setcounter{definition}{0}

For the sake of completeness we recall here some well-known facts
about pseudodifferential operators on the torus $ \T^{n} $.
\begin{definition}
\label{def:symb}
For any $m\in\R$, we denote by $ S^m(\T^{n})$
the set of all the functions $p(x,\xi)\in C^\infty(\T^{n}\times \Z^{n})$ such that for every multi-index $\alpha$, there exits 
a positive constant $C_{\alpha}$ for which 
$$
|\partial_x^\alpha p(x,\xi)|\leq C_{\alpha} \langle \xi \rangle^{m},
$$
where $\langle \xi \rangle=(1+|\xi|^2)^{\frac{1}{2}}$.

We denote by $OPS^m$ the class of the corresponding pseudodifferential
operators $P=p(x,D)$ whose action on smooth functions on the torus is
defined as
$$ 
p(x, D) u (x) = \sum_{\xi \in \Z^{n}} p(x, \xi) \hat{u}_{\xi} e^{i x
  \cdot \xi} ,
$$
where $ u(x) = \sum_{\xi \in \Z^{n}} \hat{u}_{\xi} e^{i x \cdot \xi} $
is the Fourier series of $ u $. 
\end{definition}
It is trivial to see that the symbol class $S^m(\T^{n})$ equipped with the semi-norms
\begin{equation*}\label{seminorms}
|p|^{(m)}_\ell=\max_{|\alpha|\leq \ell} \sup_{x \in \T^{n}} \{|\partial_x^\alpha p(x,\xi)|
\langle \xi \rangle^{-m}\} ,\quad \ell\in\N,
\end{equation*}
is a Fr\'echet space.

We need the $L^2$-continuity of the pseudodifferential operators in 
the above classes.
We state below a formulation of such a theorem for pseudodifferential operators of order $ \sigma $.
\begin{proposition}[Chinni and Cordaro, \cite{cc2016}]
  \label{pseudo}
Let $Q$ be a pseudodifferential operator on $\T^n$ of order
$\sigma\in\R$. Then there is a constant $M>0$ such that, for
every $k\in\Z_+$,
$$
\| Q u\|_{k} \leq M |q|^{(\sigma)}_{k+[|\sigma|]+n+1} \left(\|
  u\|_{k+\sigma} + \| u \|_{\sigma}\right)  , \quad 
u\in C^\infty(\T^n). 
$$
Here we denoted by $ [|\sigma|] $ the integral part of $ |\sigma|
$ plus 1, if $ |\sigma| $ is not an integer, and $ |\sigma| $ if $
|\sigma| $ is an integer.
\end{proposition}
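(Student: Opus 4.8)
The plan is to pass to Fourier series on $\T^{n}$, realize $Q$ as an infinite matrix acting on Fourier coefficients, and close the estimate by a Young (equivalently Schur) type argument in which all the smoothing comes from the $x$-regularity of the symbol; recall that the class $S^{\sigma}(\T^{n})$ of Definition \ref{def:symb} carries no $\xi$-regularity, and none is needed on the discrete dual group $\Z^{n}$.

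First I would record the matrix form of $Q$. Writing $u(x)=\sum_{\xi\in\Z^{n}}\hat{u}_{\xi}e^{ix\cdot\xi}$ and, for each fixed $\xi$, $q(x,\xi)=\sum_{\zeta\in\Z^{n}}\hat{q}_{\zeta}(\xi)e^{ix\cdot\zeta}$, one gets
$$
\widehat{Qu}_{\eta}=\sum_{\xi\in\Z^{n}}\hat{q}_{\eta-\xi}(\xi)\,\hat{u}_{\xi},
$$
so that, since $\|v\|_{k}^{2}=\sum_{\eta}\langle\eta\rangle^{2k}|\hat{v}_{\eta}|^{2}$, the asserted inequality is an $\ell^{2}(\Z^{n})$ bound for the operator with (weighted) kernel $\hat{q}_{\eta-\xi}(\xi)$. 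The essential input is the decay of $\hat{q}_{\zeta}(\xi)$ in $\zeta$: integration by parts in $x$ gives $|\zeta^{\alpha}|\,|\hat{q}_{\zeta}(\xi)|\le\sup_{x}|\partial_{x}^{\alpha}q(x,\xi)|\le|q|^{(\sigma)}_{|\alpha|}\langle\xi\rangle^{\sigma}$ for every $\alpha$, and choosing $\alpha$ along the largest component of $\zeta$ yields, for every $L\in\Z_{+}$,
$$
|\hat{q}_{\zeta}(\xi)|\le C_{n,L}\,|q|^{(\sigma)}_{L}\,\langle\xi\rangle^{\sigma}\,\langle\zeta\rangle^{-L},
$$
and I would take $L=k+[|\sigma|]+n+1$.

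To close the estimate I would use $\langle\eta\rangle\le\langle\xi\rangle+\langle\eta-\xi\rangle$ to dominate $\langle\eta\rangle^{k}$ by a combinatorial multiple of $\langle\xi\rangle^{k}+\langle\eta-\xi\rangle^{k}$ and split $\langle\eta\rangle^{k}\widehat{Qu}_{\eta}$ accordingly. In the $\langle\xi\rangle^{k}$ term the kernel is $C_{n,L}|q|^{(\sigma)}_{L}\langle\eta-\xi\rangle^{-L}$ applied to the sequence $\langle\xi\rangle^{k+\sigma}|\hat{u}_{\xi}|$, and since $\langle\cdot\rangle^{-L}\in\ell^{1}(\Z^{n})$ for $L\ge n+1$, Young's convolution inequality bounds its $\ell^{2}$ norm by $C\,|q|^{(\sigma)}_{L}\,\|u\|_{k+\sigma}$. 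In the $\langle\eta-\xi\rangle^{k}$ term the factor $\langle\eta-\xi\rangle^{k}$ is absorbed into $k$ of the $L$ units of decay, leaving the kernel $C_{n,L}|q|^{(\sigma)}_{L}\langle\eta-\xi\rangle^{-(n+1+[|\sigma|])}$ applied to $\langle\xi\rangle^{\sigma}|\hat{u}_{\xi}|$, whose $\ell^{2}$ norm is $\le C\,|q|^{(\sigma)}_{L}\,\|u\|_{\sigma}$ by Young again (the spare $[|\sigma|]$ units of decay also absorb, via $\langle\eta\rangle^{\sigma}\le C\langle\xi\rangle^{\sigma}\langle\eta-\xi\rangle^{|\sigma|}$, the discrepancy between the $\xi$- and $\eta$-weights). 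Adding the two contributions gives the claim. Alternatively one can apply the Schur test directly to the weighted kernel $\langle\eta\rangle^{k}\hat{q}_{\eta-\xi}(\xi)\langle\xi\rangle^{-(k+\sigma)}$, estimating its row and column sums by a dyadic decomposition in $|\xi|$ and in $|\eta-\xi|$.

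Each individual step — the Fourier computation, the integration by parts, the two applications of Young's inequality — is routine; the one point that needs care, and which I expect to be the real obstacle, is the bookkeeping of the constant: making a single $M$ serve for all $k$ (as in the statement) and pinning the seminorm order down to exactly $k+[|\sigma|]+n+1$, no larger. The naive bound $(\langle\eta\rangle/\langle\xi\rangle)^{k}\le C^{k}\langle\eta-\xi\rangle^{k}$ and the constant $C_{n,L}$ above each carry a factor growing with $k$ (of order $(1+n)^{L/2}$ for $C_{n,L}$), and keeping these under control is precisely what forces the decay order to exceed $k$ by $n+1$ and the summation to be organized dyadically, so that the weight ratio $\langle\eta\rangle/\langle\xi\rangle$ is large only on dyadic shells where the accumulated decay, summed over the corresponding lattice points, compensates it. This is where the full strength of $S^{\sigma}(\T^{n})$ — control of all $x$-derivatives — is used, and it is carried out in \cite{cc2016}.
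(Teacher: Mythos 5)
Your argument follows essentially the same route as the paper's: both represent $Q$ as a matrix acting on Fourier coefficients, derive the off-diagonal decay of the matrix elements (the paper's estimate \eqref{exp-ch}) by integration by parts in $x$, redistribute the weight $\langle\eta\rangle^{k}$ between $\langle\xi\rangle$ and $\langle\eta-\xi\rangle$, and close with a Schur/Young-type $\ell^{2}$ bound, the paper's binomial expansion plus Minkowski, Cauchy--Schwarz and Peetre being only a slightly different packaging of your two-term split plus Young's convolution inequality. The proposal is correct and matches the paper's proof in substance.
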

\begin{proof}
Let $\tilde q(x,\eta)=e_{-\eta}(x)Q(e_{\eta})(x)$ be the discrete
symbol of $Q$. Here $ e_{\eta}(x) = e^{ix\cdot \eta} $. 
By the definition of a pseudodifferential operator of order $\sigma$,
there is a constant $C>0$ such that
$$
|D_x^\alpha \tilde q(x,\eta)| \leq C_{\alpha} (1+|\eta|)^\sigma, \quad x \in \T^n,  \alpha\in \Z^n_+.
$$ 
Set $q(\xi, \eta) = \langle Q(e_{\eta}), \, e_{\xi} \rangle_{0}$. Then the following inequality holds:
\begin{equation}\label{exp-ch}
|q(\xi,\eta)| \leq C_{N} (1+|\xi-\eta|)^{-N} (1+|\eta|)^\sigma, \quad \xi,\eta\in \Z^n,
\end{equation}
for every $ N \in \N $ and
for some constant $C_{N} > 0$. Moreover we can write
$$
\widehat{(Qu)}(\xi) = \sum_{\eta \in \Z^n} q(\xi,\eta)\hat u(\eta), \quad \xi\in\Z^n,\, u \in C^\infty(\T^N).
$$
Now we estimate
\begin{align*}
\| &Q u  \|_{k}
= \left( \sum_{\xi \in \Z^n} (1 + |\xi|)^{2k} |\widehat{Qu}(\xi)|^{2} \right)^{1/2}\\
&
= \left( \sum_{\xi \in \Z^{n}}
\left( \sum_{\eta\in \mathbb{Z}^{n}}(1+|\xi|)^{k}|q(\xi,\eta)||\widehat u(\eta)|\right)^{\!\!2}
\right)^{1/2}\\
&
\leq
\left( \sum_{\xi \in \mathbb{Z}^{n}}
\left( \sum_{\ell =0}^{k}\binom{k}{\ell}\sum_{\eta \in \mathbb{Z}^{n}}|\xi -\eta|^{\ell}
|q (\xi,\eta)| (1+ |\eta|)^{k-\ell}|\widehat{u}(\eta)|\right)^{\!\!\!2}\right)^{1/2}.
\end{align*}
Applying the Minkowsky's inequality we can estimate the right hand side of the above inequality:
\begin{align*}
\| &Q u  \|_{k}
\leq 
 \sum_{\ell =0}^{k}\binom{k}{\ell}
 \left( \sum_{\xi \in \mathbb{Z}^{n}}
 \left( \sum_{\eta \in \mathbb{Z}^{n}} |\xi-\eta|^{\ell}
 |q(\xi,\eta)| (1+ |\eta|)^{k-\ell}|\widehat{u}(\eta)|\right)^{\!\!\!2}\right)^{1/2}\\
 &
\leq
\sum_{\ell =0}^{k}\binom{k}{\ell}
 \left( \sum_{\xi \in \mathbb{Z}^{n}}
  \left(\sum_{\eta \in \mathbb{Z}^{n}} |\xi-\eta|^{2\ell}|q(\xi,\eta)|\right)
\right.
\\
&  
\left.  
\left(\sum_{\eta \in \mathbb{Z}^{n}} |q(\xi,\eta)| (1+ |\eta|)^{2(k-\ell)}
|\widehat{u}(\eta)|^{2}\right)\right)^{1/2} = I.
\end{align*}
Applying (\ref{exp-ch}) and Peetre inequality 
$$(1+ |\eta|)^{\sigma} \leq (1 + | \xi - \eta|)^{|\sigma|} (1+ | \xi
|)^{\sigma}, \quad \xi,\eta\in \Z^n,
$$
we have
\begin{align*}
I
&
\leq C_{k}
\sum_{\ell =0}^{k} \binom{k}{\ell} 
\left(\sum_{\xi \in \mathbb{Z}^{n}}\left(\sum_{\eta\in \Z^n} (1 +
    |\eta|)^{\sigma} (1 + |\xi - \eta|)^{-(n+1)} \right)
    \right.
  \\
  &
  \cdot  
  \left.
\left(\sum_{\eta \in \mathbb{Z}^{n}} |q(\xi,\eta)| (1+ |\eta|)^{2(k-\ell)}
|\widehat{u}(\eta)|^{2}\right)\right)^{1/2}\\
&
\leq
C_{k} \sum_{\ell =0}^{k} \binom{k}{\ell}
\left(\sum_{\xi \in \mathbb{Z}^{n}}(1 + |\xi|)^{\sigma} 
\sum_{\eta \in \mathbb{Z}^{n}} |q(\xi,\eta)| (1+ |\eta|)^{2(k-\ell)}
|\widehat{u}(\eta)|^{2}\right)^{1/2}\\
&
\leq C_{k} \sum_{\ell =0}^{k} \binom{k}{\ell}
\left(\sum_{\xi,\eta \in \mathbb{Z}^{n}}  (1+ |\eta|)^{2(k-\ell)+2\sigma}
(1+|\xi-\eta|)^{|\sigma| - [|\sigma|] -1} |\widehat{u}(\eta)|^{2}\right)^{1/2}\\
&
\leq
\tilde{C}_{k} \left( \| u\|_{k+ \sigma} + \| u \|_{\sigma} \right) ,
\end{align*}
where $ C_{k} $, $ \tilde{C}_{k} $ are suitable positive constants.

We explicitly point out that $ C_{k} \leq M
|q|^{(\sigma)}_{k+[|\sigma|] +n + 1} $, for a positive constant $ M
$.
\end{proof}


\begin{thebibliography}{100}
%
%
%
%
\bibitem{abm}
P. Albano, A. Bove and M. Mughetti,
{\it Analytic Hypoellipticity for Sums of Squares and the Treves
  Conjecture\/},
 J. Funct. Anal. {\bf 274} (2018), no. 10, 2725--2753.
%
\bibitem{bg72}
M. S. Baouendi and Ch. Goulaouic,
{\it Nonanalytic-hypoellipticity for some degenerate op-
erators\/}, Bull. A. M. S., {\bf 78} (1972), 483-486.
%
%
%
%
%
\bibitem{bm-j}
A. Bove and M. Mughetti,
{\it Analytic hypoellipticity for sums of squares in the presence of
  symplectic non Treves strata\/},
J. Inst. Math. Jussieu {\bf 19}(6) (2020), 1877--1888.
%
\bibitem{bm}
A. Bove and M. Mughetti,
{\it Analytic Hypoellipticity for Sums of Squares and the Treves
  Conjecture, II},
Analysis and PDE {\bf 10} (7) (2017), 1613--1635.
%
%
%
\bibitem{monom}
A. Bove and M. Mughetti,
{\it Gevrey Regularity for a Class of Sums of Squares of Monomial
  Vector Fields\/},
Advances in Math. {\bf 373}(2020) 107323, 35pp.
%
\bibitem{bm-surv}
A. Bove and M. Mughetti,
{\it Analytic regularity for solutions to sums of squares: an
  assessment\/},
Complex Analysis and its synergies {\bf 6}(2020), no. 2.
{\tt https://doi.org/10.1007/s40627-020-00055-8}.
%
\bibitem{bm-metgen}
A. Bove and M. Mughetti,
{\it Optimal Gevrey
  Regularity for Certain Sums of Squares in Two Variables\/},
preprint, 2021.
%
%
%
%
\bibitem{btreves}
A. Bove and F. Treves,
{\it On the Gevrey hypo-ellipticity of sums of squares of vector
  fields\/},  Ann. Inst. Fourier (Grenoble) {\bf 54}(2004),
1443-1475.
%
%
%
\bibitem{hcartan}
H. Cartan,
{\it Vari\'et\'es analytiques r\'eelles et vari\'et\'es analytiques
  complexes\/},
Bulletin de la Soci\'et\'e Math\'ematique de France {\bf 85} (1957),
77--99.
%
\bibitem{chinni14}
G. Chinni,
{\it Gevrey regularity for a generalization of the Ole\u \i
  nik--Radkevi\c c operator\/},
J. Math. Anal. Appl. {\bf 415} (2014), 948--962. 
%
\bibitem{cc2016}
G. Chinni and P.~D.~Cordaro,
{\it On global analytic and Gevrey hypoellipticity on the torus and
  the M\'etivier inequality\/},
Communications in P.D.E. {\bf 42}(1) (2017), 121--141.
%
\bibitem{chinni}
G. Chinni,
{\it On the sharp Gevrey regularity for a generalization of the
  M\'etivier operator\/},
preprint, 2021.
%
%
\bibitem{chinni2}
G. Chinni,
{\it (Semi)-global analytic hypoellipticity for a class of “sums of
  squares” which fail to be  locally analytic hypoelliptic\/},
Proc. A.M.S., to appear, \texttt{DOI:
  https://doi.org/10.1090/proc/14464}. 
%
%
%
%
\bibitem{ch94}
D. Cordaro and A. A. Himonas,
{\it Global analytic hypoellipticity for a class of degenerate
  elliptic operators on the torus\/},
Math. Res. Letters {\bf 1} (1994), 501--510.
%
%
%
%
%
%
%
%
%
\bibitem{grauert}
H. Grauert,
{\it On Levi's Problem and the Imbedding of Real-Analytic
  Manifolds\/},
Annals of Mathematics {\bf 68} (1958), 460--472. 
%
%
%
%
%
%
%
%
%
\bibitem{hormander67}
L. H\"ormander,
{\it Hypoelliptic second order differential equations\/},
Acta Math. {\bf 119} (1967), 147--171.
%
\bibitem{horm71}
L. H\"ormander, {\it Uniqueness Theorems and Wave Front Sets for
  Solutions of Linear Differential Equations with Analytic
  Coefficients\/},
Communications Pure Appl. Math. {\bf 24} (1971), 671--704.
%
\bibitem{horm1}
L. H\"ormander, {\it The Analysis of Partial Differential Operators,
  I\/}, Springer Verlag, 1985.
%
\bibitem{horm3}
L. H\"ormander, {\it The Analysis of Partial Differential Operators,
  III\/}, Springer Verlag, 1985.
%
%
%
%
%
%
\bibitem{krantz}
S. G. Krantz,
{\it Function theory of several complex variables\/}, 2nd ed., AMS
Chelsea Publishing, 2001, Providence, R.I.
%
\bibitem{metivier81}
G. M\'etivier,
{\it Non-hypoellipticit\'e Analytique pour $ D_{x}^{2}
  + (x^{2} + y^{2}) D_{y}^{2} $\/},
C. R. Acad. Sci. Paris S\'er. I Math. {\bf 292} (1981), no. 7, 401--404.
%
%
%
%
%
%
%
%
\bibitem{o72}
O. A. Ole\uu \i nik,
{\it On the analyticity of solutions of partial differential equations
  and systems\/},
Colloque International CNRS sur les \'Equations
aux D\'eriv\'ees Partielles Lin\'eaires (Univ. Paris-
Sud, Orsay, 1972), 272--285. Ast\'erisque, 2 et 3. Societ\'e
Math\'ematique de France, Paris, 1973.
%
%
\bibitem{ol-rad-73}
O. A. Ole\uu \i nik and E. V. Radkevi\vv c,
{\it The analyticity of the solutions of linear partial differential
  equations\/},
(Russian) Mat. Sb. (N.S.), {\bf 90(132)} (1973), 592--606.
%
%
%
%
\bibitem{rothschild-stein76}
  L. Rothschild and E. M. Stein,
  {\it Hypoelliptic differential operators and nilpotent groups\/},
  Acta Math. {\bf  137} (3 - 4) (1976), 247--320.
%
\bibitem{serre51}
J-P. Serre,
{\it Applications de la th\'eorie g\'en\'erale \`a divers probl\`emes
  globaux\/},
S\'eminaire Henri Cartan, 20, {\bf 4} (1951-1952).
%
%
%
%
%
%
%
%
%
%
\bibitem{Treves}
F. Treves,
{\it Symplectic geometry and analytic hypo-ellipticity, in
  Differential equations\/}, La Pietra 1996 (Florence),
Proc. Sympos. Pure Math. {\bf 65}, Amer. Math. Soc., Providence, RI,
1999, 201-219.
%
%
\bibitem{trevespienza}
F. Treves, {\it On the analyticity of solutions of sums of
  squares of vector fields\/},  Phase space analysis of partial
differential equations, Bove, Colombini, Del Santo ed.'s, 315-329,
Progr. Nonlinear Differential Equations Appl., {\bf 69},
Birkh\"auser Boston, Boston, MA, 2006.
%
\bibitem{treves-book}
F. Treves,
{\it Aspects of Analytic PDE\/}, book to appear.
%
%
%
\end{thebibliography}
\end{document}